\newtheorem{thm}{Theorem}[section]
\newtheorem{lem}[thm]{Lemma}
\newtheorem{prop}[thm]{Proposition}
\newtheorem{cor}[thm]{Corollary}
\newtheorem{assu-nota}[thm]{Assumption--Notation}
\theoremstyle{definition}
\newtheorem{defn}[thm]{Definition}
\newtheorem{rem}[thm]{Remark}
\newtheorem{ex}[thm]{Example}
\newtheorem{qst}[thm]{Question}
\newcommand{\inv}{^{-1}}
\newcommand{\C}{\mathbb C}
\newcommand{\Z}{\mathbb Z}
\newcommand{\Q}{\mathbb Q}
\newcommand{\pp}{\mathbb P}
\newcommand{\ud}{^{(d)}}
\newcommand{\up}[1]{^{(#1)}}
\newcommand{\OO}{\mathcal O}
\DeclareMathOperator{\Pic}{Pic}
\DeclareMathOperator{\pic}{Pic}
\DeclareMathOperator{\Alb}{Alb}
\DeclareMathOperator{\Hom}{Hom}
\DeclareMathOperator{\pr}{pr}
\DeclareMathOperator{\divis}{div}
\DeclareMathOperator{\vol}{vol}
\newcommand{\ol}{\overline}
\newcommand{\wt}{\widetilde}
\numberwithin{equation}{section}
\title[The eventual paracanonical map\dots]{The eventual paracanonical map of a variety of maximal Albanese dimension}
\author{Miguel \'Angel Barja,  Rita Pardini and Lidia Stoppino}
\date{}
\begin{document}

\begin{abstract}
Let $X$ be a smooth complex projective variety   such that the Albanese map of $X$ is generically finite onto its image.
Here we study  the so-called {\em eventual $m$-paracanonical map} of $X$, whose existence is implied by the results of \cite{articolone} (when $m=1$ we also assume $\chi(K_X)>0$).\par
We show that for $m=1$ this map  behaves in  a similar way  to the canonical map of a surface of general type, as described in \cite{beauville-canonique},  while it is birational for $m>1$.
We also describe it explicitly  in several examples.
\par
\noindent{\em 2000 Mathematics Subject Classification:} 14C20 (14J29, 14J30, 14J35, 14J40).
\end{abstract}

\maketitle
\section{Introduction}
Let $X$ be a smooth complex projective variety    and let   $a\colon X\to A$ a map into an abelian variety that is  generically finite onto its image and such that $a^*\colon\pic^0(A)\to \pic^0(X)$ is injective; we identify $\pic^0(A)$ with a subgroup of  $\pic^0(X)$ via $a^*$.

Let $L\in \Pic(X)$ be  a line bundle  such that $h^0(L\otimes \alpha)>0$ for every $\alpha\in \pic^0(A)$. We may consider the  pull back $L\up{d}$ to the \`etale cover $X\up{d}\to X$ induced by the $d$-th multiplication map of $A$. In \cite{articolone} it is shown that for $d$ large and divisible enough and $\alpha\in\pic^0(A)$ general  the map given by $|L\up{d}\otimes \alpha|$ does not depend on $\alpha$  and is  obtained by base change from the so-called {\em eventual map} associated to $L$,  a generically finite map $\phi\colon X\to Z$, uniquely determined up to birational isomorphism, such that $a$ factors through it.

Here we consider the case when $a\colon X\to A$ is the Albanese map and $L=mK_X$, $m\ge 1$,  and study the  {\em eventual $m$-paracanonical map}, which is intrinsically attached to $A$. (For $m=1$ one needs to assume also that  $\chi(K_X)>0$, since by the generic vanishing theorem $\chi(K_X)$ is the generic value of $h^0(K_X\otimes \alpha)$).

After recalling (\S \ref{sec:prelim}) the main properties of the eventual map, in \S  \ref{sec:para} we concentrate on the case $m=1$ and prove  that the relations between the numerical invariants of $X$ and of the paracanonical image $Z$ are completely analogous to those between a surface of general type and its canonical image (see \cite{beauville-canonique}).
In  \S \ref{sec:m>1} we prove that the eventual $m$-canonical map is always birational for $m>1$, and therefore $m=1$ is the only interesting case.

The last section contains several  examples,  that show that the eventual paracanonical map is really a new object and needs not be birational or coincide   with the Albanese map, and can have arbitrarily large degree. We finish by posing a couple of questions.
\smallskip

{\em Acknowledgements:} we wish to thank Christopher Hacon for very useful  mathematical communications, and especially  for pointing out the reference \cite{HMc}. The second and third named author would like to thank the Departament de Matem\`atiques of the Universitat Polit\`ecnica de Catalunya for the invitation and the warm hospitality.
\bigskip

{\bf Conventions:}  We work over the complex numbers. \par
 ``map'' means  rational map. \par
Given maps $f\colon X\to Y$ and $g\colon X\to Z$, we say that $g$ is composed with $f$ if there exists a map $h\colon Y\to Z$ such that $g=h\circ f$. Given a map  $f\colon X\to Y$  and an involution $\sigma$ of $X$, we say that $f$ is composed with $\sigma$ if $f\circ\sigma=f$.
\par
We say that two dominant maps $f\colon X\to Z$, $f'\colon X\to Z'$ are birationally equivalent if there exists a birational isomorphism $h\colon Z\to Z'$ such that $f'=h\circ f$.
\par
A generically finite map means a rational map that is generically finite \underline{onto its image}.
A smooth projective variety $X$ is of maximal Albanese dimension if its Albanese map is generically finite.
\par The symbol $\equiv$ denotes numerical equivalence of  line bundles/$\Q$-divisors.
\section{Preliminaries}\label{sec:prelim}

In this section we fix the set-up and recall some facts and definitions. More details can be found in  \cite{barja-severi}, \cite{ugua}, \cite{articolone}.
\medskip

Let $X$ be a smooth projective map, let $a\colon X\to A$ be a generically finite map to an abelian variety of dimension $q$. Given a line bundle $L$ on $X$, the {\em continuous rank} $h^0_a(L)$  of $L$ (with respect to $a$) is defined as the  minimum value of $h^0(L\otimes\alpha)$ as $\alpha $  varies in $\Pic^0(A)$.

 Let $d$ be an integer and denote by $\mu_d\colon A\to A$ the multiplication by $d$; the following cartesian diagram defines the variety $X\up{d}$ and the maps $a_d$ and $\wt{\mu_d}$.
 \begin{equation}\label{diag:0}
\begin{CD}
X\up{d}@>\wt{\mu_d}>>X\\
@V{a_d}VV @VV a V\\
A@>{\mu_d}>>A
\end{CD}
\end{equation}
We set $L\ud:=\wt{\mu_d}^*L$; one has $h^0_a(L\ud)=d^{2q}h^0_a(L)$.\par
 We say that $a$ is  {\em strongly generating} if the induced map $\Pic^0(A)\to\Pic^0(X)$ is an inclusion; notice that   if $a$ is strongly generating, then the variety $X\up{d}$ is connected for every $d$.

\begin{defn} We say that  a certain property holds {\em generically}  for $L$ (with respect to $a$) iff it holds for $L \otimes \alpha$  for general $\alpha\in\Pic^0(A)$; similarly, we say that a property holds {\em eventually}  (with respect to $a$) for $L$ iff  it holds for $L\ud$ for $d$ sufficiently large and divisible. For instance, we say that $L$ is {\em eventually generically birational } if  for $d$ sufficiently large and divisible  $L\ud\otimes \alpha$ is birational for $\alpha\in \Pic^0(A)$ general.
\end{defn}

 In \cite{articolone}  we have studied  the eventual generic  behaviour of a line bundle $L$ with $h^0_a(L)>0$:

 \begin{thm}[Thm. A of \cite{articolone}]\label{thm:factorization}
 Let $X$ be  a smooth projective variety,  let  be a strongly generating  and generically finite map $a\colon X\to A$ to  an abelian variety $A$,
 and let $L\in \Pic(X)$ be such that $h^0_a(L)>0$.

 Then there exists a generically finite map $\varphi\colon X\to Z$,  unique up to birational equivalence, such that:
\begin{itemize}
\item[(a)] the map $a\colon X \to A$ is composed with $\varphi$.
\item[(b)] for  $d\ge 1$ denote by $\varphi\up{d}\colon X\up{d}\to Z\up{d}$ the map obtained  from  $\varphi\colon X\to Z$ by   taking base change with $\mu_d$;  then
 the map given by $|L\ud\otimes \alpha|$  is composed with $\varphi\up{d}$ for $\alpha\in \Pic^0(A)$ general.
\item[(c)] for $d$ sufficiently large and divisible, the map $\varphi\up{d}$ is birationally equivalent to the map given by $|L\ud\otimes \alpha|$ for $\alpha\in \Pic^0(A)$ general.
\end{itemize}
 \end{thm}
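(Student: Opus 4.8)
The plan is to construct $\varphi$ as the stable limit of the fibrations cut out by the linear systems $|L\up{d}\otimes\alpha|$ on the \'etale covers $X\up{d}$, and then to check that this limit descends to a single map on $X$. For each $d$ and each general $\alpha\in\Pic^0(A)$ I would first pass to the connected--fibre (Stein) part of the rational map defined by $|L\up{d}\otimes\alpha|$, obtaining a fibration $g_{d,\alpha}\colon X\up{d}\to Y_{d,\alpha}$; this is genuine nonconstant data because $h^0_a(L\up{d})=d^{2q}h^0_a(L)>0$, and here the strong generation hypothesis guarantees that each $X\up{d}$ is connected, so the targets are irreducible. The whole argument then revolves around how these fibrations vary with $\alpha$ and with $d$.

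First I would show that for general $\alpha$ the birational class of $g_{d,\alpha}$ does not depend on $\alpha$. Since the systems $|L\up{d}\otimes\alpha|$ form an algebraic family over the irreducible base $\Pic^0(A)$, the associated fibre--equivalence relation (equivalently, the graph of $g_{d,\alpha}$ inside a relative Hilbert scheme) is constructible, hence constant on a dense open subset; this yields a well--defined fibration $g_d\colon X\up{d}\to Y_d$. Next I would compare different levels: for $d\mid d'$ one has $X\up{d'}=(X\up{d})\up{d'/d}$, and the \'etale projection $p\colon X\up{d'}\to X\up{d}$ satisfies $p^*L\up{d}=L\up{d'}$, so $p^*|L\up{d}\otimes\alpha|$ sits inside $|L\up{d'}\otimes p^*\alpha|$. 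Consequently $g_{d'}$ refines $g_d\circ p$: every general fibre of $g_{d'}$ is contained in a fibre of $g_d\circ p$, and in particular $\dim Y_d$ is nondecreasing in the divisibility order.

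The crux is to prove that this chain of refinements terminates. Being bounded by $\dim X$, the image dimension $\dim Y_d$ stabilises, but one must further show that the fibres themselves stabilise, i.e. that for $d$ large and divisible $g_{d'}$ is the exact base change of some $g_{d_0}$ rather than a strict refinement of it. I expect this to be the main obstacle. The mechanism I would use is equivariance: each $g_d$ is canonical, hence equivariant for the Galois action of $\mathrm{Gal}(X\up{d}/X)=A[d]$ by translations, and the compatibilities above make the limiting fibre structure invariant under the union of all torsion translations. As the torsion is dense, this invariance is in effect invariance under the $A$--action through $a$, which simultaneously caps the refinement and pins the fibres down. Once the fibration has stabilised, its $A[d_0]$--equivariance lets it descend to a map $\varphi\colon X\to Z$ with $\varphi\up{d}$ birationally equivalent to $g_d$ for $d$ large and divisible; this is exactly properties (b) and (c).

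The same equivariance argument yields (a): the fibres of $\varphi$, being invariant under the translations coming from all torsion, are pulled back from the image of $a$, so $a$ factors as $a=h\circ\varphi$. Uniqueness is then formal. If $\varphi'\colon X\to Z'$ also satisfies (c), then for $d$ large $\varphi'\up{d}$ and $\varphi\up{d}$ are both birationally equivalent to the map given by $|L\up{d}\otimes\alpha|$, hence to each other; since $X\up{d}\to X$ is dominant and \'etale, birational equivalence of the base changes forces birational equivalence of $\varphi$ and $\varphi'$ over $X$. Thus the stabilisation in the previous paragraph and the descent/equivariance feeding (a) are the genuinely substantial points, while the independence of $\alpha$ and the uniqueness are comparatively routine.
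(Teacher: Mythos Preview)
This theorem is not proved in the present paper: it is Theorem~A of \cite{articolone}, quoted verbatim in the preliminaries (\S\ref{sec:prelim}) without argument. So there is no proof here to compare your sketch against.

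A couple of remarks on your outline nonetheless. The broad strategy---stabilise the maps given by $|L\up{d}\otimes\alpha|$ along the tower, use the Galois $A[d]$-equivariance to descend---is the natural one and is presumably close in spirit to what \cite{articolone} does. Two places in your write-up are imprecise. First, passing to the Stein (connected--fibre) factorisation is not what you want: once the linear-system map becomes generically finite its Stein part is birational and forgets the eventual degree $m_L$; you should track the map to its image, up to birational equivalence of the target, rather than the connected--fibre quotient. Second, your argument for (a) appeals to invariance of the fibres of $\varphi$ under ``all torsion translations'', but $X$ itself has no $A$-action; only $X\up{d}$ carries an $A[d]$-action. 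What is true is that the fibre equivalence on $X\up{d}$ defined by $|L\up{d}\otimes\alpha|$ is $A[d]$-invariant, and one has to convert this family of statements into a factorisation $a=h\circ\varphi$ on $X$. That step deserves more than the phrase ``dense torsion forces invariance under $A$''; as written it is a heuristic, not an argument.
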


 The map $\varphi\colon X\to Z$  is called the {\em eventual map} given by $L$ and the degree $m_L$ of $\varphi$ is called the {\em eventual degree} of $L$.

\section{Eventual  behaviour of the paracanonical system}\label{sec:para}

With the same notation as in \S \ref{sec:prelim}, we assume  that $X$ is a smooth projective variety of maximal Albanese dimension  with $\chi(K_X)>0$ and we consider  $L=K_X$. Since $h^0_a(K_X)=\chi(K_X)$ by generic vanishing, Theorem \ref{thm:factorization} can be applied to   $L=K_X$ taking as  $a\colon X\to A$  the Albanese map of $X$.

We call the eventual map  $\varphi\colon X\to Z$ given by $K_X$ the {\em eventual paracanonical map}, its image $Z$  the {\em eventual paracanonical image} and its  degree $m_X:=m_{K_X}$ the  {\em eventual paracanonical degree}.


The following result is analogous to Thm.~3.1 of \cite{beauville-canonique}.
\begin{thm} \label{thm:chi}
In the above set-up, denote by $\wt{Z}$ any smooth model  of $Z$. Then one of the following occurs:
\begin{itemize}
\item[(a)] $\chi(K_{\wt{Z}})=\chi(K_{X})$
\item[(b)] $\chi(K_{\wt{Z}})=0$.
\end{itemize}
Furthermore, in case (b) the Albanese image of $X$ is ruled by tori.
\end{thm}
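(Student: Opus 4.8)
The plan is to compare the continuous ranks of $K_X$ and of $K_{\wt Z}$, reformulate the dichotomy as the vanishing of the ``new'' paracanonical sections, and treat case (b) by a structure theorem. First I would replace $\varphi\colon X\to Z$ by a morphism of smooth projective varieties $\varphi\colon X\to\wt Z$, generically finite of degree $m_X$, and write $K_X=\varphi^*K_{\wt Z}+R$ with $R\ge 0$ the ramification divisor. Since $a=b\circ\varphi$ with $b\colon\wt Z\to A$, the map $b$ is again strongly generating and generically finite, so $\wt Z$ has maximal Albanese dimension and $\chi(K_{\wt Z})\ge 0$. Pushing forward and splitting off the trace one gets a decomposition $\varphi_*\omega_X=\omega_{\wt Z}\oplus\mathcal R$, and tensoring by $\beta=b^*\alpha$ yields $h^0(K_X\otimes a^*\alpha)=h^0(\omega_{\wt Z}\otimes\beta)+h^0(\mathcal R\otimes\beta)$. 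Taking $\alpha$ general and using generic vanishing for both $X$ and $\wt Z$ (with the continuous rank computed on the subtorus $b^*\Pic^0(A)$, as in \cite{barja-severi,articolone}), this reads $\chi(K_X)=\chi(K_{\wt Z})+r$, where $r\ge 0$ is the generic value of $h^0(\mathcal R\otimes\beta)$. The theorem is thus equivalent to: either $r=0$ (case (a)) or $\chi(K_{\wt Z})=0$ (case (b)).

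Case (b) would then be handled by invoking the Ein--Lazarsfeld structure theorem (and its refinements by Chen--Hacon): a variety of maximal Albanese dimension with $\chi(\omega)=0$ has Albanese image ruled by subtori. Applying this to $\wt Z$ and pushing the torus fibration forward along $\Alb(\wt Z)\to A$, the image $b(\wt Z)=a(X)$ is ruled by tori; since $a$ is the Albanese map of $X$, this is exactly the Albanese image of $X$, giving the ``furthermore'' clause.

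The heart of the proof is case (a): assuming $\chi(K_{\wt Z})>0$, show $r=0$. Here I would argue that the descended sections already recover $Z$. Concretely, for $d$ large and $\alpha$ general the full system $|K_X\ud\otimes\alpha|$ is composed with $\varphi\ud$ by Theorem \ref{thm:factorization}(b), so it descends to a system $|M|$ on $\wt Z\ud$ with $\varphi\ud{}^*M\le K_X\ud\otimes\alpha=\varphi\ud{}^*(K_{\wt Z}\ud\otimes\beta)+R\ud$; a divisor estimate (as $R$ contains no pull-back divisor, a pulled-back effective class $\le R\ud$ must vanish) gives $M\le K_{\wt Z}\ud\otimes\beta$. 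Combining the pullback inclusion of descended sections, the factorization through $|M|$, and $M\le K_{\wt Z}\ud\otimes\beta$, one obtains a sandwich $h^0(K_{\wt Z}\ud\otimes\beta)\le h^0(K_X\ud\otimes\alpha)\le h^0(M)\le h^0(K_{\wt Z}\ud\otimes\beta)$, forcing equality and hence $r=0$.

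The step that makes this sandwich legitimate --- and the main obstacle --- is the idempotence of the eventual construction: when $\chi(K_{\wt Z})>0$ one must know that $K_{\wt Z}$ is itself eventually birational with respect to $b$, so that the complete system $|K_{\wt Z}\ud\otimes\beta|$ already defines the birational map onto $Z\ud$, which is what pins down $M\le K_{\wt Z}\ud\otimes\beta$. I expect to prove this by applying Theorem \ref{thm:factorization} to $(\wt Z,K_{\wt Z},b)$ and showing its eventual paracanonical map is birational: were it not, the extra sections in $\mathcal R$ would be forced either to enlarge the image of $\varphi$, contradicting that $|K_X\ud\otimes\alpha|$ is composed with $\varphi\ud$, or to disappear, which is precisely the alternative $\chi(K_{\wt Z})=0$. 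Making this last dichotomy rigorous, rather than the bookkeeping in the other steps, is where the real work lies.
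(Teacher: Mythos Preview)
Your overall architecture is right and matches the paper: set up $\chi(K_X)=\chi(K_{\wt Z})+r$ via $\varphi_*\omega_X=\omega_{\wt Z}\oplus\mathcal R$, handle case~(b) by Ein--Lazarsfeld applied to $\wt Z$ and the factorization $a=b\circ\varphi$, and in case~(a) run a sandwich argument to force $r=0$. The gap is in your diagnosis of the ``main obstacle''.

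You claim that to close the sandwich you must first prove that $K_{\wt Z}$ is eventually birational for $b$, i.e.\ an idempotence statement for the eventual paracanonical construction. This is not needed, and the paper does not use it. The paper works at level $d=1$ (Theorem~\ref{thm:factorization}(b) already says $|K_X\otimes\alpha|$ is composed with $\varphi$ for general $\alpha$, no passage to $X\ud$ required) and closes the sandwich by an elementary Hurwitz/multiplicity count. Concretely: pick $0\ne\omega\in H^0(K_{\wt Z}\otimes\alpha)$, set $D=\divis(\omega)$; then $\divis(\varphi^*\omega)=\varphi^*D+R$, and since $|K_X\otimes\alpha|$ is composed with $\varphi$ one also has $\divis(\varphi^*\omega)=\varphi^*H_0+F$ with $F$ the fixed part. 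Comparing multiplicities along a prime divisor $\Gamma\subset\wt Z$ via a component $\Gamma'$ of $\varphi^*\Gamma$ with ramification index $e$ gives $ea+(e-1)\ge eb$, hence $a\ge b$, so $H_0\le D$. This immediately yields $h^0(H_0)\le h^0(K_{\wt Z}\otimes\alpha)\le h^0(K_X\otimes\alpha)=h^0(H_0)$ and hence $\chi(K_{\wt Z})=\chi(K_X)$. Your parenthetical justification ``$R$ contains no pull-back divisor'' is not correct as stated (components of $R$ are typically components of pull-backs), and is precisely the spot where this multiplicity argument does the work. Once you replace your idempotence program by this local computation, the proof goes through with no further difficulty.
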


\begin{proof}
We adapt  the proof of Thm.~3.1 of \cite{beauville-canonique}.

Assume that $\chi(K_{\wt{Z}})>0$. Up to replacing  $X$ by a suitable   smooth birational model, we may assume  that the induced map $\varphi\colon X\to\wt{Z}$ is a morphism. Recall that by Theorem \ref{thm:factorization}, the Albanese map of $X$ is composed with $\varphi$.
 Fix  $\alpha\in \Pic^0(A)$ general, so that in particular  the map given by $|K_{X}\otimes \alpha|$ is composed with $\varphi$.

 Pick   $0\ne\omega \in H^0( K_{\wt{Z}}\otimes \alpha)$, denote by $\varphi^*\omega\in H^0(K_X\otimes \alpha)$ the pull-back of $\omega$ regarded as a differential form with values in $\alpha$, and  set $D:=\divis(\omega)$.  The Hurwitz formula gives
$\divis(\varphi^*\omega)=\varphi^*D+R$, where $R$ is the ramification divisor of $\varphi$. On the other hand, by assumption we have that  $\divis(\varphi^*\omega)=\varphi^*H_0+F$, where $H_0$ is an effective divisor of $\wt{Z}$ and $F$ is the fixed part of $|K_{X}\otimes \alpha|$. Hence we have:
\begin{equation}\label{eq:pullback}
\varphi^*D+R=\varphi^*H_0+F.
\end{equation}
We want to use the above relation to show that $H_0\le D$.
Let $\Gamma$ be a prime divisor of $\wt{Z}$, let $\Gamma'$ be a component of $\varphi^*\Gamma$ and let $e$ be the ramification index of $\varphi$ along $\Gamma'$, that is, $e$ is the multiplicity of $\Gamma'$ in $\varphi^*\Gamma$. Note that $\Gamma'$ appears in $R$ with multiplicity $e-1$.
If  $a$ is the multiplicity of $\Gamma$ in $D$ and $b$ is   the multiplicity of $\Gamma$ in $H_0$, then  comparing  the multiplicity of $\Gamma'$  in both sides  of \eqref{eq:pullback} we  get $ea+e-1\ge eb$, namely $a\ge b$.
This shows that $H_0\le D$ and we have the following chain of inequalities:
\begin{equation} \label{eq:eq}
h^0(H_0)\le h^0(K_{\wt{Z}}\otimes\alpha)\le h^0(K_{X}\otimes\alpha)=h^0(H_0).
\end{equation}
So all the inequalities  in \eqref{eq:eq} are actually equalities and, by the generality of $\alpha$, we have $\chi(K_{\wt{Z}})= h^0(K_{\wt{Z}}\otimes\alpha)= h^0(K_{X}\otimes\alpha)=\chi(K_{X})$\smallskip

Assume  $\chi(K_{\wt{Z}})=0$, instead. Then the Albanese image of $\wt Z$ is ruled by tori by \cite[Thm.~3]{ein-lazarsfeld}. Let $\beta\colon \wt Z\to A$ be the map such that $a=\beta \circ \varphi$. By the universal property of the Albanese map, we have that $\beta$
 is the Albanese map of $\wt Z$, hence $a(X)=\beta(\wt Z)$ is also ruled by tori.
\end{proof}

\begin{cor}\label{cor:severi}
In the above set-up, denote by $n$ the dimension of $X$.  Then:
 \begin{enumerate}
\item $\vol(K_X)\ge m_X n! \chi(K_X)$
\item if $\chi(K_{\wt Z})>0$, then  $\vol(K_X)\ge 2m_X n! \chi(K_X)$
\end{enumerate}
In particular, the stronger inequality (ii) holds whenever the Albanese image of $X$ is not ruled by tori.
\end{cor}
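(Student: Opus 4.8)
The plan is to reduce both inequalities to known Severi--type inequalities, using the factorization of the canonical system through the eventual paracanonical map $\varphi\colon X\to\wt Z$ together with the dichotomy of Theorem~\ref{thm:chi}.

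First I would replace $X$ by a smooth model on which $\varphi\colon X\to\wt Z$ is a morphism, as in the proof of Theorem~\ref{thm:chi}; this affects neither $\vol(K_X)$ nor $\chi(K_X)$. By Hurwitz, $K_X=\varphi^*K_{\wt Z}+R$ with $R\ge 0$ the ramification divisor, and since the volume is monotone under addition of effective divisors and multiplies by the degree under generically finite pull-back, I get the basic estimate $\vol(K_X)\ge\vol(\varphi^*K_{\wt Z})=m_X\vol(K_{\wt Z})$. I also note that $\wt Z$ is of maximal Albanese dimension: the map $\beta\colon\wt Z\to A$ with $a=\beta\circ\varphi$ is its Albanese map (as observed in the proof of Theorem~\ref{thm:chi}) and is generically finite because $a$ is.

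For (ii) I would assume $\chi(K_{\wt Z})>0$, so that Theorem~\ref{thm:chi} forces case (a), i.e.\ $\chi(K_{\wt Z})=\chi(K_X)$. Feeding the generalized Severi inequality $\vol(K_{\wt Z})\ge 2\,n!\,\chi(K_{\wt Z})$ on the maximal-Albanese-dimension variety $\wt Z$ (see \cite{barja-severi}) into the basic estimate gives $\vol(K_X)\ge 2\,m_X\,n!\,\chi(K_X)$. For (i) this same computation disposes of the case $\chi(K_{\wt Z})>0$ (indeed (ii) is stronger), so the only remaining case is $\chi(K_{\wt Z})=0$. Here the route through $\wt Z$ collapses, because Severi on $\wt Z$ yields only $\vol(K_{\wt Z})\ge 0$; crucially, no invariant of $\wt Z$ sees the positive number $\chi(K_X)$. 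I would therefore instead apply the degree-weighted Clifford--Severi inequality of \cite{articolone} directly to $K_X$ on $X$, namely $\vol(K_X)\ge m_X\,n!\,h^0_a(K_X)$, and conclude using $h^0_a(K_X)=\chi(K_X)$ from generic vanishing. Finally, the closing assertion is immediate: if the Albanese image of $X$ is not ruled by tori, then by the last statement of Theorem~\ref{thm:chi} we are not in case (b), so $\chi(K_{\wt Z})>0$ and (ii) applies.

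The step I expect to be the crux is exactly the case $\chi(K_{\wt Z})=0$ of (i): the naive pull-back estimate degenerates there, so the degree factor $m_X$ must be produced on $X$ itself. Concretely, one should check that for general $\alpha$ the moving part of $|K_X\otimes\alpha|$ is $\varphi^*H$ for a line bundle $H$ on $\wt Z$ whose continuous rank with respect to $\beta$ equals $\chi(K_X)$, and then apply the first Clifford--Severi inequality to (the positive part of) $H$ to obtain $\vol(K_X)\ge m_X\vol(H)\ge m_X\,n!\,\chi(K_X)$. Ensuring that $H$ is positive enough for this last inequality, and that its continuous rank is indeed $\chi(K_X)$, is the only delicate point.
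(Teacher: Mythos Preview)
Your argument is correct and matches the paper's: for (i) the paper simply cites Corollary~3.12 of \cite{articolone} (your ``degree-weighted Clifford--Severi inequality'' $\vol(K_X)\ge m_X\,n!\,h^0_a(K_X)$), which applies uniformly without splitting on $\chi(K_{\wt Z})$, so your final paragraph of worry is unnecessary; for (ii) the paper does exactly what you do---Theorem~\ref{thm:chi} gives $\chi(K_{\wt Z})=\chi(K_X)$, the Main Theorem of \cite{barja-severi} gives $\vol(K_{\wt Z})\ge 2n!\,\chi(K_{\wt Z})$, and one finishes via the pull-back estimate $\vol(K_X)\ge m_X\vol(K_{\wt Z})$ (the paper phrases this last step as ``arguing as in Corollary~3.12'').
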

\begin{proof}
(i)  Follows directly by Corollary 3.12  of \cite{articolone}.
\smallskip

For    (ii) observe that if $\chi(K_{\wt Z})>0$, then $\chi(K_X)=\chi(K_{\wt Z})$ by Theorem  \ref{thm:chi};  the Main Theorem of \cite{barja-severi} gives $\vol(K_{\wt Z})\ge 2n!\chi(K_{\wt Z})=2n!\chi(K_X)$ and, arguing as in Corollary 3.12, we have inequality (ii).

Finally, the last sentence in the statement is a consequence of Theorem \ref{thm:chi}.
\end{proof}

One can be more precise in the surface case:
\begin{prop}\label{prop:surface}
In the above set-up, assume that   $\dim X=2$ and $X$ is minimal of general type.
Then one of the following cases  occurs:
\begin{enumerate}
\item[(a)]  $p_g(X)=p_g(\wt Z)$, $q(X)=q(\wt Z)$ and    $m_X\le 2$
\item[(b)]  $\chi(\wt Z)=0$,   $2\le m_X\le 4$ and the Albanese image of $X$ is ruled by tori.
\end{enumerate}
\end{prop}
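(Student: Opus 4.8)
The plan is to run the dichotomy of Theorem \ref{thm:chi} with $n=2$ and to pin down $m_X$ in each case by playing the lower bounds on $\vol(K_X)$ from Corollary \ref{cor:severi} against the Bogomolov--Miyaoka--Yau inequality $K_X^2\le 9\chi(\OO_X)$, which is available because $X$ is minimal of general type, so that $\vol(K_X)=K_X^2$. Throughout I would use that $a=\beta\circ\varphi$ is generically finite onto its two-dimensional image, so $Z$ (and its smooth model $\wt Z$) is a surface and one has $\chi(K_{\wt Z})=\chi(\OO_{\wt Z})$.

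First I would treat case (a) of Theorem \ref{thm:chi}, where $\chi(K_{\wt Z})=\chi(K_X)>0$. After replacing $X$ by a birational model so that $\varphi\colon X\to\wt Z$ is a generically finite morphism of degree $m_X$, pullback of holomorphic forms gives $p_g(\wt Z)\le p_g(X)$, and the injectivity of $\varphi^*$ on $H^1(\OO)$ (valid for a dominant generically finite map, since $\varphi_*\varphi^*$ is multiplication by $m_X$ on rational cohomology and respects the Hodge decomposition) gives $q(\wt Z)\le q(X)$. The equality $\chi(K_{\wt Z})=\chi(K_X)$ reads $p_g(\wt Z)-q(\wt Z)=p_g(X)-q(X)$, so the two nonnegative defects $p_g(X)-p_g(\wt Z)$ and $q(X)-q(\wt Z)$ coincide. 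To force them to vanish I would invoke the strong generating hypothesis: writing $a=\beta\circ\varphi$ with $\beta\colon\wt Z\to A$, injectivity of $a^*=\varphi^*\circ\beta^*$ on $\Pic^0(A)$ (for the Albanese map $a^*$ is in fact an isomorphism onto $\Pic^0(X)$) forces $\beta^*$ injective, whence $q(\wt Z)=\dim\Pic^0(\wt Z)\ge\dim\Pic^0(A)=q(X)$. Combined with $q(\wt Z)\le q(X)$ this yields $q(\wt Z)=q(X)$ and hence $p_g(\wt Z)=p_g(X)$. Finally Corollary \ref{cor:severi}(ii) gives $\vol(K_X)\ge 4m_X\chi(K_X)$, so $4m_X\chi(K_X)\le K_X^2\le 9\chi(K_X)$ forces $m_X\le 2$.

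Then I would treat case (b), where $\chi(K_{\wt Z})=0$; since $\chi(K_{\wt Z})=\chi(\OO_{\wt Z})$ on a surface this is exactly $\chi(\wt Z)=0$, and the assertion that the Albanese image of $X$ is ruled by tori is the last statement of Theorem \ref{thm:chi}. The lower bound $m_X\ge 2$ follows from birational invariance of $\chi(\OO)$: if $\varphi$ were birational (that is $m_X=1$) then $\chi(\wt Z)=\chi(K_X)>0$, contradicting $\chi(\wt Z)=0$. For the upper bound, Corollary \ref{cor:severi}(i) gives $\vol(K_X)\ge 2m_X\chi(K_X)$, so $2m_X\chi(K_X)\le K_X^2\le 9\chi(K_X)$ forces $m_X\le 4$.

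The only genuinely delicate point is the equality $q(\wt Z)=q(X)$ in case (a): the elementary pullback inequalities leave the common defect undetermined, and it is precisely the strong generating hypothesis, fed in through the factorization $a=\beta\circ\varphi$, that supplies the reverse inequality. Everything else is a routine combination of the Severi-type bounds of Corollary \ref{cor:severi}, the surface relation $\chi(K)=\chi(\OO)$, and the classical inequality $K_X^2\le 9\chi(\OO_X)$.
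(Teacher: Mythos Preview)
Your proposal is correct and follows essentially the same route as the paper: the dichotomy of Theorem~\ref{thm:chi}, the Severi-type bounds of Corollary~\ref{cor:severi}, and the Bogomolov--Miyaoka--Yau inequality combine exactly as you describe. The only cosmetic difference is in the argument for $q(\wt Z)=q(X)$: the paper simply observes that since the Albanese map $a$ of $X$ factors through $\varphi$, the map $\beta\colon\wt Z\to A$ is (by the universal property) the Albanese map of $\wt Z$, giving $q(\wt Z)=\dim A=q(X)$ directly, and then deduces $p_g(\wt Z)=p_g(X)$ from the equality of $\chi$'s; your detour through the separate pullback inequalities and the injectivity of $\beta^*$ on $\Pic^0$ reaches the same conclusion by an equivalent argument.
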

\begin{proof}
Assume $\chi(\wt Z)>0$; then by Theorem  \ref{thm:chi} we have  $\chi(\wt Z)=\chi(X)$ and Corollary \ref{cor:severi} (ii) gives  $K^2_X\ge 4m_X\chi(X)$. Hence  $m_X\le 2$
follows immediately by the Bogomolov-Miyaoka-Yau inequality $K^2_X\le 9\chi(K_X)$. We have $q(Z)=q(X)$,  since the Albanese map of $X$ is composed with $\varphi\colon X\to Z$, so $p_g(X)=\chi(X)+q(X)-1=\chi(\wt Z)+q(\wt Z)-1=p_g(\wt Z)$.
\smallskip

Assume now that $\chi(\wt Z)=0$; in this case the Albanese image of $X$ is ruled by tori  by Theorem \ref{thm:chi} and Corollary \ref{cor:severi} (i) gives  $K^2_X\ge 2m_X\chi(X)$. Using the Bogomolov-Miyaoka-Yau inequality as above we get $m_X\le 4$. On the other hand $m_X>1$, since $\varphi$ cannot be birational because $\chi(X)>\chi(\wt Z)$.
\end{proof}

\begin{rem}
For the case of a minimal smooth threefold of general type $X$, it is proven in \cite{CCZ} that the inequality $K_X^3\leq 72 \chi(X)$ holds. Hence the   arguments used to prove  Proposition \ref{prop:surface} above show that in this case  we have $m_X\leq 6$ if $\chi(\wt Z)>0$,  and  $m_X\leq 12$ if $\chi(\wt Z)=0$.
\end{rem}

 \section{Eventual behaviour of the  $m$-paracanonical system for $m>1$}\label{sec:m>1}

 In this section we consider  the eventual $m$-paracanonical map for varieties of general type and maximal Albanese dimension, for $m\ge 2$. Our main result implies
that   for $m\ge 2$ the eventual $m$-paracanonical map does not give additional information on the geometry of $X$:
 \begin{thm}\label{thm:pluri-para}
 Let $X$ be a smooth projective $n$-dimensional variety of general type and maximal Albanese dimension; denote by $a\colon X\to A$  the Albanese map.

  Then there exists  a positive integer $d$ such that the  system $|mK_{X\up{d}}\otimes \alpha|$ is   birational for every $m\ge 2$ and for every $\alpha\in \Pic^0(A)$.

  In particular $mK_X$ is eventually generically  birational for $m\ge 2$.
 \end{thm}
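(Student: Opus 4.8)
The plan is to reduce the birationality of $|mK_{X\up{d}}\otimes\alpha|$ for all $m\ge 2$ and all $\alpha$ simultaneously to a single effective statement on one well-chosen étale cover, and then bootstrap from the case $m=2$. First I would pass to the cover $X\up{d}$ and note that the Albanese map $a_d\colon X\up{d}\to A$ is again generically finite onto its image, so $X\up{d}$ is still of general type and of maximal Albanese dimension. The key reduction is that it suffices to make $|2K_{X\up{d}}\otimes\alpha|$ birational uniformly in $\alpha$: once the bicanonical-type system separates general points and tangent vectors, the standard argument that $|K\otimes(\text{base-point-free and birational})|$ stays birational lets me add further copies of $K_{X\up{d}}$ (each $K_{X\up{d}}\otimes\beta$ has sections for general $\beta$ by generic vanishing, since $\chi$ grows like $d^{2q}$ on the cover) and conclude birationality of $|mK_{X\up{d}}\otimes\alpha|$ for every $m\ge 2$. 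So the crux is the effective, $\alpha$-uniform bicanonical statement on a sufficiently divisible cover.

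For that crux I would invoke the effective results on pluricanonical and paracanonical systems of varieties of maximal Albanese dimension — precisely the circle of ideas around the generic vanishing theorem and the work of Chen--Hacon and Hacon--Pardini (the reference \cite{HMc} flagged in the acknowledgements is presumably the relevant input). The mechanism I expect is: for a variety of maximal Albanese dimension, $K_X\otimes\alpha$ is \emph{continuously globally generated} away from a proper subset for general $\alpha$, and $2K_X\otimes\alpha$ separates points outside the exceptional locus of $a$. Pulling back to $X\up{d}$, the multiplication-by-$d$ isogeny subdivides $\Pic^0(A)$ so finely that the finitely many translates needed to separate any prescribed pair of points become available from a \emph{single} $\alpha$ on the cover; concretely, I would use that $\mu_d^*$ is multiplication by $d$ on $\Pic^0(A)$, so a general $\alpha$ upstairs is a $d$-th root situation that absorbs the twists $\beta_1,\dots,\beta_k$ one would otherwise need downstairs. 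This is what converts a \emph{generic-in-$\alpha$} birationality statement into one holding for \emph{every} $\alpha$ on $X\up{d}$.

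The main obstacle will be exactly this passage from ``general $\alpha$'' to ``every $\alpha$,'' which is the whole point of the theorem and does not follow from soft generic-vanishing formalism alone. The danger is that the bad locus of $\alpha$ (where birationality fails) could vary with the point being separated, so that no single cover kills all of it at once. I would control this by a compactness/constructibility argument: the locus in $X\up{d}\times X\up{d}\times\Pic^0(A)$ where $|mK\otimes\alpha|$ fails to separate a pair is closed, and I must show its projection to $\Pic^0(A)$ is not everything for $d$ large and divisible. Quantifying how the separation locus shrinks under $\mu_d$ — showing the failure locus is contained in a divisorial or lower-dimensional part of $\Pic^0(A)$ that $\mu_d^*$ eventually avoids — is the delicate step; everything else (the inductive bump from $m=2$ to general $m$, and the maximal-Albanese-dimension hypotheses guaranteeing nonvanishing of the twists) is comparatively routine.
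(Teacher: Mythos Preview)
Your reduction to $m=2$ is fine (and even simpler than you make it: $K_{X\up{d}}$ is effective, so $|2K_{X\up{d}}+\alpha|\subseteq |mK_{X\up{d}}+\alpha|$ directly), but the core mechanism you propose for the passage from ``general $\alpha$'' to ``every $\alpha$'' does not work. The pullback $\mu_d^*\colon \Pic^0(A)\to\Pic^0(A)$ is multiplication by $d$, a \emph{surjective} isogeny; it does not ``subdivide'' $\Pic^0(A)$ or ``eventually avoid'' any nonempty bad locus. If there were a closed bad set of $\alpha$'s downstairs, its preimage under $\mu_d^*$ would still be a nonempty closed set upstairs, and your constructibility/compactness argument cannot shrink it. You have also misread the role of \cite{HMc}: that paper is Hacon--McKernan on Shokurov's rational connectedness conjecture, not a result about paracanonical systems.

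What actually makes the cover help is a positivity phenomenon you do not mention. Since $\wt{\mu_d}^*M\equiv d^2 M_d$, the pullback of a fixed ample class on $A$ becomes \emph{relatively small} on $X\up{d}$ as $d$ grows. The paper exploits this via a key lemma: using the canonical model $f\colon X\to Y$ (BCHM) and the fact that its fibres are rationally chain connected (this is where \cite{HMc} enters, to factor $a$ through $Y$), one shows that for $d\gg 0$ there is a decomposition $K_{X\up{d}}\equiv L+(n-1)M_d+E$ with $L$ nef and big and $E$ effective with SNC support. With this in hand, one sets $D=K_{X\up{d}}-\lfloor E\rfloor$, cuts a curve $C$ by $n-1$ general members of $|M_d|$, and uses the Koszul resolution of $\mathcal I_C$ together with Kawamata--Viehweg vanishing (applicable because $L+(n-1-i)M_d+\Delta$ is nef and big plus klt boundary for every $i\le n-1$) to see that $|K_{X\up{d}}+D+\alpha|$ restricts completely to $C$ for \emph{every} $\alpha$; on $C$ the degree exceeds $2g(C)+1$, so it is very ample there, and the conclusion follows since $|K_{X\up{d}}+D+\alpha|\subseteq|2K_{X\up{d}}+\alpha|$. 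The uniformity in $\alpha$ comes for free from the vanishing theorem, not from any isogeny trick on $\Pic^0$.
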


 \begin{rem}
 The question whether the $m$-canonical system of  a variety of maximal Albanese dimension is birational has been considered by several authors.

The answer is positive for   $m\ge 3$ (\cite{JLT},  \cite{chen-hacon}, \cite{pareschi-popa}).

 The case $m=2$ has been studied in \cite{bica} for a  variety $X$ of  maximal Albanese dimension with  $q(X)>\dim X$, proving that  if $|2K_X|$ is not birational  then  either $X$ is birational to a theta divisor in a p.p.a.v. or there exists a fibration $f\colon X\to Y$ onto an irregular variety with $\dim Y<\dim X$.
If $X$ is a smooth theta divisor in a p.p.a.v., then  it is easy to check directly that  $|2K_X\otimes \alpha|$ is  birational for  $0\ne \alpha\in \Pic^0(X)$ and that for every $d\ge 2$ the system $|K_{X\up{d}}\otimes \alpha|$ is birational for every $\alpha \in\Pic^0(X)$.   \end{rem}

 \noindent  {\bf Warning:} in this section all the equalities  of divisors that we write down are equalities of $\Q$-divisors    up to numerical equivalence. Moreover, in accordance with the standard use  in birational geometry,  we switch to the additive notation and write $L+\alpha$ instead of $L\otimes \alpha$.  \medskip

We fix a very ample divisor $H$ on $A$ and we set $M:=a^*H$; we use the notation of diagram \ref{diag:0} and set $M_d:=a_d^*H$ for $d>1$.
 By \cite[Prop 2.3.5]{LB}, we have that $\wt{\mu_d}^*M\equiv d^2 M_d$.

 Moreover, since the question is birational,  we may assume that the map $f\colon X\to Y$  to the canonical model (which exists by \cite{bchm}) is a log resolution.

To prove Theorem \ref{thm:pluri-para} we use the   following preliminary result, that  generalizes \cite[Lem.~2.4]{ugua}:
\begin{lem} \label{lem:nef}
In the above set-up, there exists $d\gg0$ such that
$$K_{X\up{d}}=L+(n-1)M_d+E,$$ where $L$  and $E$ are $\Q$-divisors such that $L$ is  nef and big   and $E$ is  effective with normal crossings support.
\end{lem}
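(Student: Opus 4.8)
The plan is to reduce the assertion to a statement on $X$ itself by exploiting that $\wt{\mu_d}$ is étale and that pulling back along $\mu_d$ shrinks the coefficient of $M$ by a factor $d^2$. Since $\mu_d$ is étale, so is $\wt{\mu_d}$, whence $K_{X\up{d}}=\wt{\mu_d}^*K_X$; combining this with $\wt{\mu_d}^*M\equiv d^2M_d$ gives
$$K_{X\up{d}}-(n-1)M_d=\wt{\mu_d}^*\Bigl(K_X-\tfrac{n-1}{d^2}M\Bigr).$$
Because $\wt{\mu_d}$ is étale it preserves nefness, bigness, effectivity, and normal crossings support, so it suffices to produce, for $d\gg0$, a decomposition $K_X-\tfrac{n-1}{d^2}M=L_0+E_0$ on $X$ with $L_0$ nef and big and $E_0$ effective with normal crossings support, and then set $L:=\wt{\mu_d}^*L_0$ and $E:=\wt{\mu_d}^*E_0$. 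The mechanism that makes this work is that $\tfrac{n-1}{d^2}\to0$, so only an arbitrarily small multiple of $M$ has to be absorbed.

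Next I would build this decomposition from the canonical model. Write $K_X=f^*K_Y+E_0$, where $E_0\ge0$ is $f$-exceptional; since $f$ is a log resolution, $E_0$ has normal crossings support, and $K_Y$ is ample because $Y$ is the canonical model. The key structural input is that the Albanese map factors through $f$: as $Y$ has canonical, hence rational, singularities, the fibres of the birational morphism $f$ are contracted by $a$, so $a=\bar a\circ f$ for a generically finite morphism $\bar a\colon Y\to A$, and therefore $M=a^*H=f^*M_Y$ with $M_Y:=\bar a^*H$ big and nef on $Y$. Consequently
$$K_X-\tfrac{n-1}{d^2}M=f^*\Bigl(K_Y-\tfrac{n-1}{d^2}M_Y\Bigr)+E_0.$$

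Finally I would fix $d$ large. Since $K_Y$ is ample and $M_Y$ is a fixed divisor, openness of ampleness yields an $\epsi_0>0$ such that $K_Y-\epsi M_Y$ is ample for $0\le\epsi<\epsi_0$; choosing $d$ with $\tfrac{n-1}{d^2}<\epsi_0$ makes $K_Y-\tfrac{n-1}{d^2}M_Y$ ample, so $L_0:=f^*\bigl(K_Y-\tfrac{n-1}{d^2}M_Y\bigr)$ is nef and big on $X$. Pulling back along the étale map $\wt{\mu_d}$ then gives $K_{X\up{d}}=L+(n-1)M_d+E$ with $L$ nef and big and $E$ effective with normal crossings support, as required.

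I expect the main obstacle to be this structural step, namely guaranteeing that the nef and big part is not destroyed when the small multiple of $M$ is subtracted. Subtracting $\epsi M$ directly from the merely nef and big divisor $f^*K_Y$ could fail to stay nef along curves that $f$ contracts but $a$ does not; it is precisely the factorisation $M=f^*M_Y$ through the canonical model that moves the subtraction down to $Y$, where $K_Y$ is genuinely ample and the argument closes. Verifying that the Albanese map descends to $Y$ — via rationality of canonical singularities and rational chain connectedness of the fibres of $f$ — is therefore the crux of the proof.
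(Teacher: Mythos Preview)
Your proof is correct and follows essentially the same route as the paper: descend the Albanese map to the canonical model $Y$, use ampleness of $K_Y$ and openness of the ample cone to absorb the small multiple $\tfrac{n-1}{d^2}M_Y$, and then pull back along the \'etale map $\wt{\mu_d}$. The only cosmetic difference is that the paper justifies the factorisation $a=\bar a\circ f$ by invoking \cite{HMc} (rational chain connectedness of the fibres of $f$), whereas you phrase it via rational singularities; both work, and you in fact mention both in your closing paragraph.
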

\begin{proof}
 By \cite[Cor.1.5]{HMc} the fibers of $f$ are rationally chain connected, hence they are contracted to points by $a$;  it follows that the Albanese map of $X$ descends to a morphism  $\ol a\colon Y\to A$.

Setting $\ol M:=\ol a^*H$, there exists $\epsilon>0$ such that  for $|t|<\epsilon$ the class $K_Y-t\ol M$ is ample and therefore $f^*K_Y-tM$ is nef and big on $X$.  By  the definition of canonical model  we have $K_X=f^*K_Y+E$, with $E$ an effective $\Q$-divisor. In addition, the support  of $E$ is a normal crossings divisor since $f$ is a log resolution. Choosing $d$ such that $\frac{n-1}{d^2}<\epsilon$ and pulling back to $X$, we have $$K_{X\up{d}}=\wt{\mu_d}^*K_X=\wt{\mu_d}^*(f^*K_Y-\frac{n-1}{d^2}M)+\frac{n-1}{d^2}\wt{\mu_d}^*M+\wt{\mu_d}^*E.$$ The statement now follows by observing that:
\begin{itemize}
\item $\frac{n-1}{d^2}\wt{\mu_d}^*M=(n-1)M_d$,
\item  $L:=\wt{\mu_d}^*(f^*K_Y-\frac{n-1}{d^2}M)=\wt{\mu_d}^*f^*K_Y-(n-1)M_d$ is the class of a nef and big line bundle,
\item $\wt{\mu_d}^*E$ is an effective $\Q$-divisor with normal crossings support.
\end{itemize}
 \end{proof}

 \begin{proof}[Proof of Thm. \ref{thm:pluri-para}]
 Since $X$ is of maximal Albanese dimension, $K_X$ is effective and $|2K_X|\subset |mK_X|$ for $m\ge 2$, hence it is enough to
 to prove the statement for $m=2$.  We may  also assume $n\ge 2$, since for $n=1$ the statement follows easily by Riemann-Roch.

By Theorem \ref{thm:factorization} and   Lemma \ref{lem:nef}  we can fix  $d\gg0$ such that:
\begin{itemize}
\item $K_{X\up{d}}=L+(n-1)M_d+E$, with $L$ nef and big and $E$  is effective with normal crossings support.
\item  for $\alpha\in \Pic^0(A)= \Pic^0(X)$ general the map given by $|2K_{X\up{d}s}+\alpha|$ coincides with $\varphi\up{d}$ (notation as in Theorem \ref{thm:factorization})
\end{itemize}

Write $E=\left\lfloor E\right\rfloor +\Delta$, so that $\Delta\ge 0$ is a $\Q$-divisor with $\left\lfloor \Delta\right\rfloor =0$ and normal crossings support.
Set $D:=K_{X\up{d}}-\left\lfloor E\right\rfloor=L+(n-1)M_d+\Delta$. We are going to show that if $C\subset X$ is the intersection of $(n-1)$ general  elements of $|M_d|$, then the map induced by $|K_{X\up{d}}+D+\alpha|$ restricts to a generically injective map on $C$ for $\alpha\in \Pic^0(A)$ general. Since $|K_{X\up{d}}+D+\alpha|\subseteq |2K_{X\up{d}}+\alpha|$ and   the map $\varphi\up{d}$ is composed with the  map $a_d\colon X\up{d}\to A$ by Theorem \ref{thm:factorization}, this will prove the statement.

First of all we prove that $|K_{X\up{d}}+D+\alpha|$ restricts to a complete system on $C$ by showing that $h^1(\mathcal I_C(K_{X\up{d}}+D+\alpha))=0$ for every $\alpha\in \Pic^0(A)$. To this end we look at  the resolution of  $\mathcal I_C$ given by the Koszul complex:
\begin{equation}\label{eq:koszul}
0\to \OO_{X\up{d}}(-(n-1)M_d)\to\dots   \to \bigwedge^2  \OO_{X\up{d}}(-M_d)^{\oplus^{n-1}}\to \OO_{X\up{d}}(-M_d)^{\oplus^{n-1}}\to \mathcal I_C\to 0
\end{equation}
Twisting \eqref{eq:koszul} by $K_{X\up{d}}+D+\alpha$ we obtain a resolution of $\mathcal I_C(K_{X\up{d}}+D+\alpha)$ such that every term is a sum of line bundles numerically equivalent to  a line bundle of the form
$$K_{X\up{d}}+D-iM_d=K_{X\up{d}}+L+(n-1-i)M_d+\Delta$$
 for some $1\le i\le n-1$. Since the hypotheses of Kawamata-Viehweg's vanishing theorem apply to each of these line bundles, it follows that $\mathcal I_C(K_{X\up{d}}+D+\alpha)$ has a resolution of length $n-2$ such that all the sheaves appearing in the resolution  have zero higher cohomology. Hence the hypercohomology spectral sequence  gives $h^1(\mathcal I_C(K_{X\up{d}}+D+\alpha))=0$ as claimed.

Finally observe that  by construction $(K_{X\up{d}}+D+\alpha)|_C$ has degree strictly greater than $2g(C)+1$ and therefore it is very ample.
 \end{proof}

\section{Examples, remarks and open questions}
In this section we give some examples of the eventual behaviour of the paracanonical system; we keep the notation of \S \ref{sec:prelim} and  \S \ref{sec:para}.
All our examples are  constructed as abelian covers; we use the notation and  the general theory of \cite{rita-abelian}.

Let $\Gamma$ be a finite abelian group,  let  $\Gamma^*:=\Hom (\Gamma, \C^*)$ be its group of characters,  and let $\pi\colon X\to Y$ be a flat $\Gamma$-cover with $Y$ smooth and $X$ normal. One has a decomposition $\pi_*\OO_X=\oplus_{\chi\in \Gamma^*}L_{\chi}\inv$, where $L_{\chi}$ is a line bundle and $\Gamma$ acts on $L_{\chi}\inv$ via the character $\chi$. In particular, we have $L_1=\OO_Y$.

Our main observation  is the following:
\begin{lem}\label{lem:examples}
 In the above setup, assume that  $X$ and $Y$ are  smooth of maximal Albanese dimension;  denote by $\ol a\colon Y\to A$ the Albanese map. If the following conditions hold:
\begin{itemize}
\item[(a)] the map $\pi$ is totally ramified, i.e., it does not factor through an  \'etale cover $Y'\to Y$ of degree $>1$;
\item[(b)]  for every $1\ne \chi\in \Gamma^*$,  $h^1(L_{\chi}\inv)=0$;
\item[(c)] there exists precisely one element $\ol \chi\in\Gamma^*$ such that $h^0_a(K_Y+L_{\ol\chi})>0$.
\end{itemize}
then:\begin{enumerate}
\item the map $a:=\ol a\circ \pi$ is the Albanese map of $X$;
\item the eventual paracanonical map $\varphi\colon X\to Z$ is composed with $\pi\colon X\to Y$;
\item  if in addition $K_Y\otimes L_{\ol\chi}$ is eventually generically birational (e.g., if $a$ is generically injective),  then $\pi$ is birationally equivalent to $\varphi$.
\end{enumerate}
\end{lem}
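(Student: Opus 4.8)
The plan is to combine the eigenspace decomposition of the direct images under the $\Gamma$-cover $\pi$ with the uniqueness of the eventual map furnished by Theorem~\ref{thm:factorization}. For part (1), I would first use the abelian cover formula $H^1(\OO_X)=\bigoplus_{\chi\in\Gamma^*}H^1(Y,L_\chi\inv)$: the summand $\chi=1$ is $H^1(\OO_Y)$ and every other summand vanishes by hypothesis (b), so $q(X)=q(Y)=\dim A$. Since $a=\ol a\circ\pi$ factors through $\ol a=\alb_Y$, the universal property of the Albanese variety yields a surjective homomorphism $\psi\colon\Alb(X)\to A$ with $a=\psi\circ\alb_X$, and the equality $\dim\Alb(X)=\dim A$ just obtained makes $\psi$ an isogeny. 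Were $\deg\psi>1$, the \'etale cover $Y\times_A\Alb(X)\to Y$ would be nontrivial and, via $(\pi,\alb_X)$, $\pi$ would factor through it, contradicting the total ramification hypothesis (a); hence $\psi$ is an isomorphism and $a$ is the Albanese map of $X$.

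For parts (2) and (3) the computational core is relative duality for the flat cover $\pi$, which gives $\pi_*\omega_X=\bigoplus_{\chi}(\omega_Y\otimes L_\chi)$ (see \cite{rita-abelian}); tensoring by the $\Gamma$-invariant bundle $\ol a^*\alpha$ and taking sections produces, for every $\alpha\in\Pic^0(A)$, the eigenspace decomposition
\[ H^0(X,K_X\otimes a^*\alpha)=\bigoplus_{\chi\in\Gamma^*}H^0\big(Y,K_Y\otimes L_\chi\otimes\ol a^*\alpha\big). \]
By the definition of the continuous rank (with upper semicontinuity, so that the generic value equals the minimum), hypothesis (c) says that for general $\alpha$ only the summand $\chi=\ol\chi$ is nonzero. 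The same holds after base change along $\mu_d$: writing $Y\up d:=Y\times_A A$, with induced map $\ol a_d\colon Y\up d\to A$, and $X\up d=X\times_Y Y\up d$, the map $\pi\up d\colon X\up d\to Y\up d$ is again a $\Gamma$-cover, and since $h^0_a\big((K_Y\otimes L_\chi)\up d\big)=d^{2q}h^0_a(K_Y\otimes L_\chi)=0$ for $\chi\ne\ol\chi$, for general $\alpha$ all of $H^0(X\up d,K_{X\up d}\otimes\alpha)$ lies in the $\ol\chi$-eigenspace $H^0\big(Y\up d,K_{Y\up d}\otimes L_{\ol\chi}\up d\otimes\ol a_d^*\alpha\big)$. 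As these sections are pullbacks under $\pi\up d$ of sections on $Y\up d$ up to a common factor, the map defined by $|K_{X\up d}\otimes\alpha|$ equals the map defined by $|K_{Y\up d}\otimes L_{\ol\chi}\up d\otimes\ol a_d^*\alpha|$ precomposed with $\pi\up d$.

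Now let $\ol\varphi\colon Y\to Z$ be the eventual map of $K_Y\otimes L_{\ol\chi}$ with respect to $\ol a$ (it exists by (c) and Theorem~\ref{thm:factorization}), and put $\psi:=\ol\varphi\circ\pi$. By Theorem~\ref{thm:factorization}(c) applied on $Y$, for $d\gg0$ and general $\alpha$ the map $\ol\varphi\up d$ is birationally equivalent to the one given by $|K_{Y\up d}\otimes L_{\ol\chi}\up d\otimes\ol a_d^*\alpha|$; precomposing with $\pi\up d$ and using the previous paragraph, $\psi\up d=\ol\varphi\up d\circ\pi\up d$ is birationally equivalent to the map given by $|K_{X\up d}\otimes\alpha|$. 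The latter is in turn birationally equivalent to $\varphi\up d$ by Theorem~\ref{thm:factorization}(c) on $X$; since a dominant map from $X$ is recovered from its base change along the surjection $\wt{\mu_d}$, we conclude that $\varphi$ is birationally equivalent to $\psi=\ol\varphi\circ\pi$. This proves (2) (and is consistent with Theorem~\ref{thm:factorization}(a), since $a=\ol a\circ\pi$ indeed factors through $\psi$). For (3), if $K_Y\otimes L_{\ol\chi}$ is eventually generically birational then $\ol\varphi$ is birational, whence $\psi$, and therefore $\varphi$, is birationally equivalent to $\pi$; the parenthetical case follows because generic injectivity of $a$ forces that of $\ol a$, under which $K_Y\otimes L_{\ol\chi}$ is known to be eventually birational.

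The step I expect to be most delicate is the passage from the single-eigenspace statement to a genuine factorization of maps through $\pi\up d$ — the ``common factor'' identification in the second paragraph — together with checking that the surviving eigenspace is honestly $H^0\big(Y\up d,K_{Y\up d}\otimes L_{\ol\chi}\up d\otimes\ol a_d^*\alpha\big)$ with the correct $\Gamma$-linearization of $K_{X\up d}\otimes\alpha$; this is where the structure theory of abelian covers of \cite{rita-abelian} has to be used with care. Routing the final comparison through Theorem~\ref{thm:factorization}(c) on both $X$ and $Y$ is precisely what lets me avoid carrying out an explicit Galois descent along $\mu_d$ for the eventual maps themselves.
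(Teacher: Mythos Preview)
Your argument is correct and follows essentially the paper's approach: the eigenspace decomposition of $\pi_*\omega_X$ together with hypothesis (c) forces $H^0(K_{X\up d}\otimes\alpha)$ to lie in a single $\Gamma$-eigenspace for general $\alpha$, so $\Gamma$ acts trivially on $\pp(H^0(K_{X\up d}\otimes\alpha))$ and the map factors through $\pi\up d$, after which Theorem~\ref{thm:factorization} finishes. The only differences are cosmetic: for (i) the paper argues via injectivity of $\Pic^0(Y)\to\Pic^0(X)$ (the dual of your \'etale-cover construction), and for (ii)--(iii) it does not explicitly name the eventual map $\ol\varphi$ of $K_Y\otimes L_{\ol\chi}$ on $Y$, working instead directly with $|K_{Y\up d}\otimes L_{\ol\chi}\up d\otimes\alpha|$.
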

 \begin{proof}
We use freely  the notation of  \S \ref{sec:prelim}.

(i) First of all,  by condition (b)  we have $$q(X)=h^1(\OO_X)=\sum_{\chi}h^1(L_{\chi}\inv)=q(Y),$$  hence the induced map $\Alb(X)\to A$ is an isogeny. On the other hand, the fact that $\pi$ is  totally ramified implies that the map $\Pic^0(Y)\to \Pic^0(X)$ is injective.
 It follows   that $\Alb(X)\to A$ is  an isomorphism   and the map $a=\ol a\circ \pi$ is the Albanese map of $X$.
\smallskip

(ii) Denote by $\pi\up{d}\colon X\up{d}\to Y\up{d}$ the $\Gamma$-cover induced by $\pi\colon X\to Y$ taking base change with $Y\up{d}\to Y$. There is a cartesian  diagram:
 \begin{equation}\label{diag:1}
\begin{CD}
X\up{d}@>>>X\\
@V{\pi\up{d}}VV @VV \pi V\\
Y\up{d}@>>>Y\\
@V{\ol a_d}VV @VV \ol a V\\
A@>{\mu_d}>>A
\end{CD}
\end{equation}

 By Theorem \ref{thm:factorization}, it is enough to show that for every $d$ the system $|K_{X\up{d}}\otimes \alpha|$ is  composed with $\pi\up{d}\colon X\up{d}\to Y\up{d}$ for $\alpha\in\Pic^0(A)$ general or, equivalently, that $|K_{X\up{d}}\otimes \alpha|$ is $\Gamma$-invariant for $\alpha\in \Pic^0(A)$ general.
One has ${\pi\up{d}}_*\OO_{X\up{d}}=\oplus_{\chi\in \Gamma^*}(L_{\chi}\ud)\inv$. Hence by the formulae for abelian covers we have $h^0_a(K_{X\up{d}})=\sum_{\chi\in \Gamma^*}h^0_a(K_{Y\up{d}}\otimes L_{\chi}\ud )=h^0_a(K_{Y\up{d}}\otimes L_{\ol\chi}\ud)$, because of condition (c).
So, for $\alpha\in \Pic^0(A)$ general, we have   $H^0(K_{X\up{d}}\otimes\alpha)=H^0(K_{Y\up{d}}\otimes L_{\ol \chi}\ud\otimes \alpha)$; it follows that $\Gamma$ acts trivially on $\pp(H^0(K_{X\up{d}}\otimes\alpha))$ and therefore the map given by $|K_{X\up{d}}\otimes\alpha|$ factors through $\pi\up{d}$.
\smallskip

(iii) The proof of (ii)  shows that  eventually $\varphi\up{d}$ is birationally equivalent to the composition of $\pi\up{d}$ with the map given by $|K_{Y\up{d}}\otimes L_{\ol\chi}\ud\otimes \alpha|$ for $\alpha\in \Pic^0(A)$ general. So if $|K_Y\otimes L_{\ol\chi}|$ is eventually  generically birational, then $\varphi\up{d}$ and $\pi\up{d}$ are birationally equivalent for $d$ sufficiently large and divisible.
 \end{proof}

\begin{ex}[Examples with $m_X=2$]\label{ex:2-1}
 Every variety  of maximal Albanese dimension  $Z$      with $\chi(Z)=0$  and $\dim Z=n\ge 2$ occurs as the eventual paracanonical image for some $X$ of general type with $m_X=2$.

Let $L$ be a very ample   line bundle on $Z$,  pick  a smooth divisor $B\in |2L|$  and let $\pi\colon X\to Z$ be the double cover given by the equivalence relation $2L\equiv B$.  Since $Z$ is of maximal Albanese dimension, $K_Z$ is effective and therefore $|K_Z\otimes L|$ is birational. By Kodaira  vanishing  we have $h^1(Z, L\inv)=0$ and
$$
\chi(K_Z\otimes L)=h^0(Z, K_Z\otimes L)=h^0_a(Z, K_Z\otimes L).
$$
So by continuity the system $|K_Z\otimes L\otimes \alpha|$  is birational for general $\alpha\in\Pic^0(Z)$. By  Lemma 3.5 of \cite{articolone}, $K_{Z\up{d}}\otimes L\ud$ is generically birational for every $d$  and therefore  $\pi\colon X\to Z$ is the eventual paracanonical map  by Lemma \ref{lem:examples}.
\end{ex}

\begin{rem}
If in Example \ref{ex:2-1} we take a variety $Z$ whose  Albanese map is not generically injective, then  the eventual paracanonical map of the variety $X$ neither is an isomorphism nor coincides with the Albanese map of $X$.

Varieties $Z$ with $\chi(Z)=0$ whose Albanese map is generically finite of degree $>1$ do exist for $\dim Z\ge 2$. Observe that  it is enough to construct 2-dimensional examples, since in higher dimension one can consider the product of $Z$ with any variety of maximal Albanese dimension.  To construct an example with $\dim Z=2$ one can proceed  as follows:
\begin{itemize}
\item[--]
 take a curve $B$ with a $\Z_2$-action such that the quotient map $\pi\colon B\to B':=B/\Z_2$ is ramified and $g(B')>0$;
 \item[--] take $E$ an elliptic curve, choose a $\Z_2$-action by translation and denote by $E'$ the quotient curve $E/\Z_2$;
\item[--] set $Z:=(B\times E)/\Z_2$, where $\Z_2$ acts diagonally on the product.
\end{itemize}
The map $f\colon Z\to B'\times E'$ is a ramified double cover and it is easy to see that the Albanese map of $Z$ is the composition of $f$ with the inclusion $E'\times B'\to E'\times J(B')$. Indeed, by the universal property of the Albanese map there is a factorization $Z\to \Alb(Z)\to E'\times J(B')$. The map $\Alb(Z)\to E'\times J(B')$ is an isogeny, since $q(Z)=g(B')+1$; in addition, the dual map $E'\times J(B')\to \Pic^0(Z)$ is injective, since $f$ is ramified. We conclude that $\Alb(Z)\to E'\times J(B')$ is an isomorphism and therefore the Albanese map of $Z$ has degree 2.
\end{rem}
\begin{ex}[Examples with $m_X=4$ and $\chi(Z)=0$] \label{ex:product}
For $i=1, 2$ let $A_i$ be abelian varieties of dimension $q_i\ge 1$ and set $A=A_1\times A_2$. For $i=1,2$ let $M_i$ be a very ample line bundle on $A_i$ and choose $B_i\in |2M_i|$ general; we let $\pi\colon X\to A$ be the $\Z_2^2$-cover with branch divisors $D_1=\pr_1^*B_1$, $D_2=\pr_2^*B_2$ and $D_3=0$, given by relations $2L_1\equiv D_2$ and $2L_2\equiv D_1$, where $L_1=\eta_1\boxtimes M_2$, $L_2=M_1\boxtimes \eta_2$, with  $0\ne \eta_i\in \Pic^0( A_i)[2]$. One has $L_3=L_1\otimes L_2$, so that $L_3$ is very ample.
It is easy to check that $h^1(L_j\inv)=0$ for $j=1,2,3$ and $h^0_a(L_j)>0$ if and only if  $j=3$.
So the assumptions of Lemma \ref{lem:examples} are satisfied and arguing as in Example \ref{ex:2-1} we see that $\pi\colon X\to A$ is both the Albanese map and the eventual paracanonical map of $X$.

An interesting feature of this example is that  for  $d$  even the cover $\pi\up{d}\colon X\up{d}\to A$ is the product of two double covers $f_i\ud\colon X_i\to A_i$, $i=1,2$, branched on the pull back of $B_i$. If $q_1,q_2\ge 2$, then $q(X\up{d})=q(X)$, but if, say, $q_1=1$ then $q(X\up{d})$ grows with $d$.

In particular, if $q_1=q_2=1$ then for $d$ even the surface $X\up{d}$ is a product of bielliptic curves. \end{ex}
\begin{ex}[Examples with  large  $m_X$]\label{ex:higher}
A variation of Example \ref{ex:product} gives examples with higher values of $m_X$.
For simplicity, we describe an example of a threefold $X$ with $m_X=8$, but it is easy to see how to modify the construction to obtain example with $m_X=2^k$ for every $k$ (the dimension of the examples will also increase, of course).

For $i=1,\dots 3$, let  $A_i$ be elliptic curves, let $M_i$ be a very ample divisor on $A_i$ and let $B_i\in |2M_i|$ be a general divisor. Set $A=A_1\times A_2\times A_3$ and choose    $ \eta_1, \eta_2\in \Pic^0(A)[2]$ distinct  such that, setting $\eta_3=\eta_1+\eta_2$, then $\eta_j|_{A_i}\ne 0$ for every choice of $i,j\in\{1,2,3\}$.
Set $\Gamma:=\Z_2^3$ and denote by $\gamma_1, \gamma_2, \gamma_3$ the standard generators of $\Gamma$. We let $\pi\colon X\to A$  be  the  $\Gamma$-cover given by the following building data:
$$D_{\gamma_i}=\pr_i^*B_i, \quad i=1,\dots 3, \quad D_{\gamma}=0\quad {\rm if} \quad \gamma\ne \gamma_i,$$
and
$$L_i=\pr_i^*M_i\otimes\eta_i, \quad i=1,\dots 3.$$
It is immediate to see that the reduced fundamental relations (cf. \cite[Prop. 2.1]{rita-abelian}):
$$2L_i\equiv D_{\gamma_i}, \quad i=1,\dots 3$$
are satisfied.
The  corresponding cover $\pi\colon X\to A$ satisfies the assumptions of Lemma \ref{lem:examples} and arguing as in the previous cases  we see that $\pi\colon X\to A$ is both the Albanese map and the eventual paracanonical map of $X$.
\end{ex}
\begin{qst}
The statement of  Theorem \ref{thm:chi} shows an analogy between the eventual  behaviour of the paracanonical system of varieties of maximal Albanese dimension and the canonical map  (cf. \cite[ Thm.~3.1]{beauville-canonique}). As in the case of the canonical map, the previous examples show that  it is fairly easy to produce examples of case (b) of Theorem \ref{thm:chi} (cf. Example \ref{ex:2-1}).
On the other hand, we do not know any examples of case (a) with $m_X>1$.

For instance, in the $2$-dimensional case by Proposition \ref{prop:surface} $X$ would be a surface of general type and maximal Albanese dimension with an involution $\sigma$ such that $q(X/\sigma)=q(X)$, $p_g(X/\sigma)=p_g(X)$ and such that the eventual paracanonical map of $X/\sigma$ is  birational. If such an $X$ exists, then it has Albanese map of degree $2k$, with $k\ge2$. Indeed, the Albanese map of $X$ is composed with $\sigma$ and $K^2_{X/\sigma}\le \frac 12 K^2_X\le \frac 92 \chi(X)=\frac 92 \chi(X/\sigma)<5\chi(X/\sigma)$, where the last inequality but one  is given by the Bogomolov-Miyaoka-Yau inequality.
Theorem  6.3 (ii) of \cite{articolone}  implies that the Albanese map of $X/\sigma$ has degree $k>1$.
\end{qst}

\begin{qst}
In the surface case the eventual paracanonical degree is at most 4 by Proposition \ref{prop:surface}. In order to give a bound    for higher dimensional varieties in the same way, one would need  to bound the volume of $K_X$ in terms of $\chi(K_X)$; however Example 8.5  of \cite{articolone} shows that this is not possible for $n\ge 3$. So, it is an open question whether it is possible to give a bound on  $m_X$, for $X$ a variety of fixed dimension $n>2$. Note that Example \ref{ex:higher} shows that this bound, if it exists, has to increase with $n$.
\end{qst}


     \end{document}